\newtheorem{theorem}{Theorem}[section]
\newtheorem{lemma}[theorem]{Lemma}
\newtheorem{proposition}[theorem]{Proposition}
\newtheorem{corollary}[theorem]{Corollary}
\newtheorem{assumption}[theorem]{Assumption}
\numberwithin{equation}{section}
\begin{document}
\baselineskip=15pt

\title[On a question of Sean Keel]{On a question of Sean Keel}

\author[I. Biswas]{Indranil Biswas}

\address{School of Mathematics, Tata Institute of Fundamental
Research, Homi Bhabha Road, Bombay 400005, India}

\email{indranil@math.tifr.res.in}

\author[S. Subramanian]{S. Subramanian}

\address{School of Mathematics, Tata Institute of Fundamental
Research, Homi Bhabha Road, Bombay 400005, India}

\email{subramnn@math.tifr.res.in}

\subjclass[2000]{14F05, 14J26}

\keywords{Ample line bundle, finite field, Nakai-Moishezon criterion}

\date{}

\begin{abstract}
Let $X$ be a smooth projective surface defined over $\overline{{\mathbb 
F}_p}$, and let $L$ be a line bundle over $X$ such that $L\cdot Y\,
>\, 0$ for every complete curve $Y$ contained in $X$. A question of Keel 
asks 
whether $L$ is ample. If $X$ is a ${\mathbb P}^1$--bundle over a curve, 
we prove that this question has an affirmative answer.

\end{abstract}

\maketitle

\section{Introduction}\label{sec1}

The Nakai-Moishezon criterion says that a line bundle $L$ on a smooth
projective surface $X$ defined over an algebraically closed field is
ample if and only if $L\cdot Y\, >\, 0$ for every complete curve $Y$
contained in $X$ and $L\cdot L\, >\, 0$. It should be mentioned that
if $L\cdot Y\, >\, 0$ for every complete curve $Y\, \subset\,
X$, then $L\cdot L\, \geq\, 0$.
Mumford constructed a smooth complex projective surface 
$X$ and a line bundle $L\, \longrightarrow\, X$ such that
$L\cdot Y\, >\, 0$ for every complete curve $Y$ contained
in $X$ but $L$ is not
ample \cite[p. 56, Example 10.6]{Ha1}. The surface $X$ in
Mumford's example is a ${\mathbb P}^1_{\mathbb C}$--bundle over a
smooth projective curve.

Fix a prime $p$. Let $X$ be a smooth projective
surface defined over $\overline{{\mathbb F}_p}$, and let $L$
be a line bundle over $X$, such that $L.Y\, >\, 0$
for every complete curve $Y\, \subset\, X$. A question of Keel asks 
whether
$L$ is ample \cite[p. 3959, Question 0.9]{Ke}. See also \cite{To}
for related material.

Our aim here is to give an affirmative answer to the question of
Keel under the assumption that $X$ is a ${\mathbb P}^1$--bundle
over a curve.

We prove the following theorem (see Theorem \ref{thm1}):

\begin{theorem}\label{thm0}
Let $C$ be an irreducible smooth projective curve defined over
$\overline{{\mathbb F}_p}$, and let $E\,\longrightarrow\, C$
be a vector bundle of rank two. Let $L\,\longrightarrow\, {\mathbb 
P}(E)$ be a line bundle such that $L.Y\, >\, 0$
for every complete curve $Y$ contained in ${\mathbb P}(E)$. Then
$L$ is ample.
\end{theorem}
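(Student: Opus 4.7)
The plan is to apply the Nakai--Moishezon criterion, which reduces the theorem to showing $L^{2}\,>\,0$. Since the introduction records that the hypothesis already implies $L^{2}\,\geq\,0$, I will assume $L^{2}\,=\,0$ for contradiction. Writing the numerical class as $L\,\equiv\,a\xi + bf$, with $\xi\,=\,c_{1}(\mathcal{O}_{\mathbb{P}(E)}(1))$ and $f$ the class of a fiber, the hypothesis $L\cdot f\,>\,0$ together with $L^{2}\,=\,0$ forces $a\,>\,0$ and $b\,=\,-a(\deg E)/2$. The argument will then split according to the stability properties of $E$.

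If $E$ is not semistable, a maximal destabilizing quotient produces a section $\sigma\,\subset\,\mathbb{P}(E)$ with $\sigma^{2}\,<\,0$, and a short computation with the inequalities $L\cdot\sigma\,>\,0$ and $L\cdot f\,>\,0$ forces $L^{2}\,>\,0$, a direct contradiction. If $E$ is semistable but not strongly semistable, then for some $n\,\geq\,1$ the Frobenius pullback $F_{C}^{n\ast}E$ is unstable; the destabilizing section of $\mathbb{P}(F_{C}^{n\ast}E)\to C$ will push down through the purely inseparable base-change morphism $W\colon\mathbb{P}(F_{C}^{n\ast}E)\,=\,\mathbb{P}(E)\times_{C,F_{C}^{n}}C\,\to\,\mathbb{P}(E)$ to an irreducible curve $Y\,\subset\,\mathbb{P}(E)$. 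A projection-formula calculation, combined with the destabilizing inequality $\deg Q\,<\,p^{n}(\deg E)/2$ and the relation $b\,=\,-a(\deg E)/2$, will yield $L\cdot Y\,<\,0$, contradicting the hypothesis.

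The main case, and the source of the main obstacle, is when $E$ is strongly semistable. Here the plan is to invoke the theorem -- special to the ground field $\overline{{\mathbb F}_p}$ and due to S.~Subramanian, in the spirit of Lange--Stuhler -- asserting that a strongly semistable rank-two bundle becomes a direct sum $N\oplus N$ of two copies of a single line bundle after pullback along some finite \'etale cover $\pi\colon C'\,\to\,C$. Consequently $\mathbb{P}(\pi^{\ast}E)\,\cong\,C'\times\mathbb{P}^{1}$, and any horizontal section $s(c)\,=\,(c,q)$ will push down via the finite \'etale map $p\colon \mathbb{P}(\pi^{\ast}E)\,\to\,\mathbb{P}(E)$ to an irreducible curve $Y\,\subset\,\mathbb{P}(E)$. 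A projection-formula computation using $s(C')^{2}\,=\,0$, the equality $\deg N\,=\,(\deg\pi)(\deg E)/2$, and the constraint $b\,=\,-a(\deg E)/2$ already established will give $L\cdot Y\,=\,0$, again a contradiction. Identifying and correctly applying this trivialization theorem is the key point; it is precisely the characteristic-$p$ feature that supplies an effective curve on the extremal ray of the nef cone and thereby prevents a Mumford-type counterexample from existing over $\overline{{\mathbb F}_p}$.
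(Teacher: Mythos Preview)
Your proposal is correct and follows the same strategy as the paper: reduce via Nakai--Moishezon to the case $L^{2}=0$, establish strong semistability, invoke Subramanian's theorem to trivialize after a finite cover, and then exhibit a horizontal curve on which $L$ has degree zero. The paper streamlines your case split by first passing to a cover $M\to C$ on which $E$ is replaced by a bundle $V$ with $\bigwedge^{2}V=\mathcal{O}_{M}$ and $\beta^{*}L$ becomes an $n$-th power of $\mathcal{O}_{\mathbb{P}(V)}(1)\otimes f^{*}\xi$ with $\deg\xi=0$; then nefness of $\mathcal{O}_{\mathbb{P}(V)}(1)$, which follows immediately from the hypothesis, is \emph{equivalent} to strong semistability of $V$, so your separate treatment of the unstable and the semistable-but-not-strongly-semistable cases collapses to a one-line observation. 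One small correction: the trivializing cover furnished by Subramanian's theorem (via essential finiteness and the fundamental group-scheme) is finite but need not be \'etale---it may well involve iterates of Frobenius---and to apply the theorem one must first normalize to degree zero, which requires a preliminary cover when $\deg E$ is odd. Since your argument uses only finiteness of $\pi$ (for the projection formula and to ensure the image of the section is a curve), neither point affects the validity of your plan.
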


\section{Line bundles over a ruled surface}

Fix a prime $p$. Let $C$ be an irreducible smooth projective curve
defined over $\overline{{\mathbb F}_p}$. Let $E\,\longrightarrow\, C$ 
be a vector bundle of rank two. Let
$$
f_0 \, :\, {\mathbb P}(E)\, \longrightarrow\, C
$$
be the projective bundle for $E$. Let
$$
L\, \longrightarrow\, {\mathbb P}(E)
$$
be a line bundle.

There is a unique integer $n$ and a unique
line bundle $\xi_0\longrightarrow C$ such that
\begin{equation}\label{e1}
L\,=\, {\mathcal O}_{{\mathbb P}(E)}(n)\otimes f^*_0\xi_0\, .
\end{equation}

\begin{lemma}\label{lem1}
There is an irreducible smooth projective curve $M$ over
$\overline{{\mathbb F}_p}$ and a nonconstant morphism
$$
\varphi\, :\, M\, \longrightarrow\, C
$$
such that ${\rm degree}(\varphi^*\xi_0)$ is a multiple
of $n$, and ${\rm degree}(\varphi)$ is even.
\end{lemma}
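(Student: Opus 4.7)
The plan is to reduce the lemma to the construction of a single nonconstant morphism $\varphi:M\to C$ of degree exactly $2n$ from an irreducible smooth projective curve $M$. With that choice,
$$
\deg(\varphi^*\xi_0)\,=\,\deg(\varphi)\cdot\deg(\xi_0)\,=\,2n\cdot\deg(\xi_0),
$$
which is automatically a multiple of $n$, while $\deg(\varphi)=2n$ is manifestly even. So both conclusions of the lemma fall out immediately once such a cover is produced; no use is made of the structure of $\xi_0$ beyond its degree.

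To build such a cover I would use a Kummer-type cyclic construction. Pick a closed point $p_0\in C$ and, by Riemann--Roch applied to $\mathcal{O}_C(Np_0)$ for $N$ large, choose a rational function $f\in k(C)$ with a simple pole at $p_0$. Then $1/f$ lies in the local ring $\mathcal{O}_{C,p_0}$ and has a simple zero at $p_0$, so the polynomial $z^{2n}-1/f\in\mathcal{O}_{C,p_0}[z]$ is Eisenstein at the uniformizer of $\mathcal{O}_{C,p_0}$: the only nonzero non-leading coefficient is the constant term, whose valuation equals $1$. Eisenstein's criterion therefore shows that $z^{2n}-1/f$, and equivalently $y^{2n}-f$, is irreducible over $k(C)$, so that $K:=k(C)[y]/(y^{2n}-f)$ is a field of degree $2n$ over $k(C)$.

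Let $M$ be the normalization of $C$ in $K$, with its canonical morphism $\varphi:M\to C$. Then $M$ is smooth (being the normalization of a reduced one-dimensional scheme), irreducible (its function field $K$ is a field), projective, and $\varphi$ is finite of degree $[K:k(C)]=2n$. This is precisely the morphism the lemma demands.

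The only place one might worry is the case $p\mid 2n$, in which $K/k(C)$ is inseparable and $\varphi$ is not étale. But Eisenstein's criterion is valid over any DVR independently of the characteristic, and the normalization of $C$ in any finite extension of $k(C)$ is automatically a smooth projective curve, so degree $2n$ and smoothness of $M$ are preserved. Hence the argument is uniform in the characteristic and the lemma follows.
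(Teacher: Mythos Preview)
Your overall strategy---produce a single cover $\varphi:M\to C$ of degree exactly $2n$ and then read off $\deg(\varphi^*\xi_0)=2n\cdot\deg(\xi_0)$---is the same reduction the paper has in mind, and the Eisenstein argument does give an irreducible degree-$2n$ extension of $k(C)$ whose normalization is the desired $M$. There is one slip, however: the appeal to Riemann--Roch is misplaced. Sections of $\mathcal{O}_C(Np_0)$ are rational functions with poles \emph{only} at $p_0$, and on a curve of genus $\geq 1$ no nonconstant function has a single simple pole (otherwise the curve would be $\mathbb{P}^1$). What your Eisenstein step actually requires is just $v_{p_0}(f)=-1$; for that it suffices to let $1/f$ be any uniformizer at $p_0$, i.e.\ any element of $\mathfrak{m}_{p_0}\setminus\mathfrak{m}_{p_0}^2$ viewed as a rational function. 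The behaviour of $f$ away from $p_0$ is irrelevant, since Eisenstein's criterion is purely local at the chosen DVR. With this correction the argument goes through unchanged.

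As for the comparison: the paper's two-line proof follows the same reduction but constructs the cover by decomposing $2n=p^a m$ with $p\nmid m$, handling the $p^a$ factor via the $a$-fold absolute Frobenius $F^a:C\to C$ and the prime-to-$p$ factor $m$ via a separable cover. Your Eisenstein/Kummer construction is more self-contained, producing the degree-$2n$ cover in a single stroke without separating the inseparable and separable parts; the paper's decomposition is terser and makes transparent why characteristic $p$ creates no obstruction, but leaves the existence of the separable piece as an exercise.
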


\begin{proof}
This is a standard fact. The power of $p$ in the factorization
of $2n$ can be handled using an iteration of the
Frobenius morphism of $C$; separable morphisms are available for
other factors.
\end{proof}

Since $\text{degree}(\varphi^*E)\,=\, {\rm degree}(\varphi)\cdot
{\rm degree}(E)$ is even, and $\text{Pic}^0(M)$ is divisible,
there is a vector bundle of rank two
$$
V\, \longrightarrow\, M
$$
with $\bigwedge^2 V\,=\, {\mathcal O}_M$
such that $\varphi^*{\mathbb P}(E)\,=\,
{\mathbb P}(V)$. We fix such a vector bundle $V$. Let
\begin{equation}\label{e2}
\beta\, :\, {\mathbb P}(V)\,=\, \varphi^*{\mathbb P}(E)
\,\longrightarrow\, {\mathbb P}(E)
\end{equation}
be the natural morphism. Let
\begin{equation}\label{e3}
f\,:\, {\mathbb P}(V)\, \longrightarrow\, M
\end{equation}
be the natural projection.

\begin{assumption}\label{a1}
{\rm For every complete curve $Y\,\subset\,
{\mathbb P}(E)$, the inequality $L\cdot Y \, >\, 0$ holds.}
\end{assumption}

{}From Assumption \ref{a1} it follows
that the integer $n$ in \eqref{e1} is positive.

Since ${\rm degree}(\varphi^*\xi_0)$ is a multiple
of $n$, and ${\rm Pic}^0(M)$ is divisible
there is a line bundle $\xi$ on $M$ such that
\begin{equation}\label{e4}
\beta^*L \, =\, ({\mathcal O}_{{\mathbb P}(V)}(1)\otimes
f^*\xi)^{\otimes n}\, ,
\end{equation}
where $\beta$ and $f$ are constructed in \eqref{e2} and \eqref{e3}
respectively.
The morphism $\beta$ is finite. Therefore, from Assumption \ref{a1}
and \eqref{e4} it follows that or every complete curve $Y\,\subset\,
{\mathbb P}(V)$,
\begin{equation}\label{e5}
({\mathcal O}_{{\mathbb P}(V)}(1)\otimes f^*\xi)\cdot Y\, >\, 0\, .
\end{equation}

Assumption \ref{a1} implies that $L\cdot L\, \geq\,0$.
If $L\cdot L\, >\, 0$, then from a criterion of Nakai-Moishezon 
it follows that $L$ is ample \cite[p. 365, Theorem 1.10]{Ha2}.
We assume that
\begin{equation}\label{e6}
L\cdot L\,=\, 0\, .
\end{equation}
Our aim is to show that this assumption leads to a contradiction.

\begin{proposition}\label{prop1}
The degree of the line bundle $\xi\,\longrightarrow\, M$
in \eqref{e4} is zero.
\end{proposition}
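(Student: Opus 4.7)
The plan is to use the hypothesis $L \cdot L = 0$ from \eqref{e6}, pull it back to $\mathbb{P}(V)$ via the finite morphism $\beta$, and then exploit the special form \eqref{e4} together with the fact that $\det V = \mathcal{O}_M$.

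First, I would apply the projection formula for the finite morphism $\beta$. Since $\beta$ has degree equal to $\text{degree}(\varphi)$, we have
$$
\beta^*L \cdot \beta^*L \,=\, \text{degree}(\varphi) \cdot (L \cdot L) \,=\, 0\,.
$$
Using \eqref{e4} and the fact that $n > 0$, this gives
$$
(\mathcal{O}_{\mathbb{P}(V)}(1) \otimes f^*\xi) \cdot (\mathcal{O}_{\mathbb{P}(V)}(1) \otimes f^*\xi) \,=\, 0\,.
$$

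Next, I would expand this intersection product on the ruled surface $\mathbb{P}(V)$. Let $\sigma$ denote the divisor class of $\mathcal{O}_{\mathbb{P}(V)}(1)$ and let $F$ denote a fiber of $f$. Three standard facts are available: (i) $\sigma^2 = \text{degree}(\det V) = 0$, because $V$ was chosen with $\bigwedge^2 V = \mathcal{O}_M$; (ii) $F \cdot F = 0$, so $(f^*\xi)^2 = 0$ since $f^*\xi$ is numerically a multiple of $F$; and (iii) $\sigma \cdot F = 1$, so $\sigma \cdot f^*\xi = \text{degree}(\xi)$.

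Substituting these three identities into the expansion of the squared class gives
$$
0 \,=\, \sigma^2 + 2\,\sigma \cdot f^*\xi + (f^*\xi)^2 \,=\, 2\,\text{degree}(\xi)\,,
$$
from which $\text{degree}(\xi) = 0$ follows immediately. There is no real obstacle here; the content of the proposition is just the arithmetic consequence of $L \cdot L = 0$ combined with the normalization $\bigwedge^2 V = \mathcal{O}_M$, which is precisely why that normalization was arranged earlier in the section.
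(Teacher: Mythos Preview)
Your proof is correct and follows essentially the same route as the paper's: both pull back the equality $L\cdot L=0$ along $\beta$, expand the square of $\mathcal{O}_{\mathbb{P}(V)}(1)\otimes f^*\xi$, and use $\mathcal{O}_{\mathbb{P}(V)}(1)^2=\deg V=0$ together with $\mathcal{O}_{\mathbb{P}(V)}(1)\cdot f^*\xi=\deg(\xi)$ to conclude. You are simply a bit more explicit about the projection formula step and about why the $(f^*\xi)^2$ term vanishes.
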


\begin{proof}
{}From \eqref{e6} and \eqref{e4} it follows that 
$$
({\mathcal O}_{{\mathbb P}(V)}(1)\otimes
f^*\xi)\cdot ({\mathcal O}_{{\mathbb P}(V)}(1)\otimes
\xi)\,=\, 0\, .
$$
So
\begin{equation}\label{e7}
{\mathcal O}_{{\mathbb P}(V)}(1)\cdot 
{\mathcal O}_{{\mathbb P}(V)}(1) + 2
({\mathcal O}_{{\mathbb P}(V)}(1)\cdot (f^*\xi))\, =\, 0\, .
\end{equation}
But ${\mathcal O}_{{\mathbb P}(V)}(1)\cdot 
{\mathcal O}_{{\mathbb P}(V)}(1)\,=\,
\text{degree}(V)\,=\, 0$, and
$$
{\mathcal O}_{{\mathbb P}(V)}(1)\cdot f^*\xi\, =\,
\text{degree}(\xi)\, .
$$
Hence the proposition follows from \eqref{e7}.
\end{proof}

A line bundle on a projective variety is called
\textit{nef} if the degree of
its restriction to every complete curve is
nonnegative.

Proposition \ref{prop1} and \eqref{e5} together
give the following corollary:

\begin{corollary}\label{cor1}
The line
bundle ${\mathcal O}_{{\mathbb P}(V)}(1)$ is nef.
\end{corollary}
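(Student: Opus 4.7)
The plan is to compute the intersection $\mathcal{O}_{\mathbb{P}(V)}(1) \cdot Y$ for an arbitrary complete curve $Y \subset \mathbb{P}(V)$, and show directly that it is nonnegative (in fact positive). The two inputs are the vanishing $\deg(\xi) = 0$ from Proposition \ref{prop1} and the strict positivity \eqref{e5}.

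First, I would observe that for every complete curve $Y \subset \mathbb{P}(V)$, the intersection number $(f^*\xi) \cdot Y$ is zero. If $Y$ is contained in a fiber of $f$, then $f^*\xi|_Y$ is a trivial line bundle and the claim is immediate. Otherwise $f$ restricts to a nonconstant morphism $\tilde Y \to M$ from the normalization $\tilde Y$ of $Y$, and by the projection formula
$$
(f^*\xi)\cdot Y \;=\; \deg\bigl((f|_{\tilde Y})^*\xi\bigr) \;=\; \deg(f|_{\tilde Y})\cdot \deg(\xi)\, ,
$$
which vanishes by Proposition \ref{prop1}.

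Then, by bilinearity of the intersection pairing,
$$
\mathcal{O}_{\mathbb{P}(V)}(1)\cdot Y \;=\; \bigl(\mathcal{O}_{\mathbb{P}(V)}(1)\otimes f^*\xi\bigr)\cdot Y \;-\; (f^*\xi)\cdot Y \;=\; \bigl(\mathcal{O}_{\mathbb{P}(V)}(1)\otimes f^*\xi\bigr)\cdot Y\, ,
$$
and \eqref{e5} makes this strictly positive, hence in particular nonnegative. Since $Y$ was arbitrary, this is exactly the nefness of $\mathcal{O}_{\mathbb{P}(V)}(1)$.

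There is essentially no obstacle: the only content is the vanishing $\deg(\xi)=0$, which is precisely what Proposition \ref{prop1} provides. The corollary is really a one-line consequence of Proposition \ref{prop1} combined with \eqref{e5}.
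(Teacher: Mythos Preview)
Your argument is correct and is precisely the reasoning the paper has in mind: the authors give no proof beyond the sentence ``Proposition~\ref{prop1} and \eqref{e5} together give the following corollary,'' and you have simply made explicit why $\deg(\xi)=0$ forces $(f^*\xi)\cdot Y=0$ so that $\mathcal{O}_{\mathbb{P}(V)}(1)\cdot Y = (\mathcal{O}_{\mathbb{P}(V)}(1)\otimes f^*\xi)\cdot Y > 0$.
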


Let $F_M\, :\, M\, \longrightarrow\, M$ be the absolute
Frobenius morphism. A vector bundle $W$ over $M$ is
called \textit{strongly semistable} if $(F^i_M)^*W$ is
semistable for every $i\, \geq\, 1$.

\begin{proposition}\label{prop2}
The vector bundle $V$ over $M$ is strongly semistable.
\end{proposition}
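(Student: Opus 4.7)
The plan is to translate the nefness of $\mathcal{O}_{\mathbb{P}(V)}(1)$ into semistability of $V$, and then to observe that this translation is automatically compatible with Frobenius pullback.

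First, I would exploit the hypothesis $\bigwedge^{2} V = \mathcal{O}_{M}$ as follows. Any line subbundle $L \hookrightarrow V$ of positive degree yields a quotient line bundle $V \twoheadrightarrow L^{-1}$ of negative degree, which in turn corresponds to a section $s \colon M \to \mathbb{P}(V)$ with $s^{*}\mathcal{O}_{\mathbb{P}(V)}(1) = L^{-1}$. Then $\mathcal{O}_{\mathbb{P}(V)}(1) \cdot s(M) = -\deg L < 0$, contradicting Corollary \ref{cor1}. Hence $V$ has no line subbundle of positive degree, and so $V$ is semistable.

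Next, to promote semistability to strong semistability, I would apply the same argument with $(F_{M}^{i})^{*}V$ in place of $V$. The projectivization $\mathbb{P}((F_{M}^{i})^{*}V)$ is naturally identified with the fibered product $\mathbb{P}(V) \times_{M, F_{M}^{i}} M$, and the resulting finite morphism onto $\mathbb{P}(V)$ pulls $\mathcal{O}_{\mathbb{P}(V)}(1)$ back to $\mathcal{O}_{\mathbb{P}((F_{M}^{i})^{*}V)}(1)$. Since nefness is preserved under pullback along any morphism, this line bundle is again nef; moreover $(F_{M}^{i})^{*}V$ still has trivial determinant, as Frobenius commutes with taking determinants. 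The argument of the previous paragraph then forces $(F_{M}^{i})^{*}V$ to be semistable for every $i \geq 1$, which is exactly strong semistability.

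The main content is the dictionary in the first step between nefness on $\mathbb{P}(V)$ and degrees of line subbundles of $V$; once this is in hand, the Frobenius step is purely formal, resting only on the compatibility of $\mathcal{O}(1)$ with base change on projective bundles and on the stability of the nefness property under arbitrary pullback. The one point to watch is the convention for $\mathbb{P}(V)$ versus $\mathbb{P}(V^{*})$, which affects whether sections correspond to quotient line bundles or to line subbundles; in either convention the triviality of $\det V$ converts one into the other, preserving the sign of the intersection number computation above.
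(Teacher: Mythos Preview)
Your argument is correct and follows essentially the same route as the paper: both deduce from Corollary~\ref{cor1} that every quotient line bundle of any pullback of $V$ has nonnegative degree, which, since $\deg V=0$, forces (strong) semistability. The only cosmetic difference is that the paper treats an arbitrary morphism $h\colon \widetilde{M}\to M$ and a quotient $h^{*}V\twoheadrightarrow Q$ in a single stroke (so the Frobenius iterates are absorbed into the choice of $h$), whereas you first pass from subbundles to quotients via $\det V=\mathcal{O}_{M}$ and then handle the Frobenius pullbacks separately.
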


\begin{proof}
Let $\widetilde{M}$ be an irreducible smooth projective
curve, and let $h\, :\, \widetilde{M}\,\longrightarrow\,
M$ be a morphism. Let
$$
h^*V\, \longrightarrow\, Q\, \longrightarrow\, 0
$$
be a quotient line bundle. Let
$$
\gamma\, :\, \widetilde{M}\,\longrightarrow\,
{\mathbb P}(V)
$$
be the morphism corresponding to $Q$. So
$Q\,=\, \gamma^*{\mathcal O}_{{\mathbb P}(V)}(1)$.
Now, from Corollary \ref{cor1},
$$
\text{degree}(Q)\,\geq \,0\,=\, \text{degree}(V)\, .
$$
Hence $V$ is strongly semistable.
\end{proof}

Fix a closed point $x_0\, \in\, M$. Let
$\varpi(M,x_0)$ be the fundamental group-scheme. We recall
that $\varpi(M,x_0)$ is constructed using the neutral
Tannakian category defined by the essentially finite vector
bundles on $M$ (see \cite{No} for essentially finite vector
bundles and fundamental group-scheme).
Using Proposition \ref{prop2} and \cite[p. 70, Theorem
3.2]{Su} we conclude that $V$ is given by a homomorphism
$$
\rho\, :\, \varpi(M,x_0)\, \longrightarrow\, \text{SL}(2,
\overline{{\mathbb F}_p})
$$
(recall that $\bigwedge^2 V\,=\, {\mathcal O}_M$).
In other words, the vector bundle $V$ is essentially finite.
This implies that there is a smooth projective curve
$\widetilde{M}$, and a morphism 
$$
h\, :\, \widetilde{M}\,\longrightarrow\, M\, ,
$$
such that the vector bundle $h^*V$ is trivial \cite[p. 557]{BH}.

Fix an isomorphism $\widetilde{M}\times {\mathbb 
P}^1_{\overline{{\mathbb F}_p}} \,
\stackrel{\sim}{\longrightarrow}\,
h^*{\mathbb P}(V)$. Fix a closed point
$c_0\, \in\, {\mathbb 
P}^1_{\overline{{\mathbb F}_p}}$. The image of the composition map
$$
\widetilde{M}\times \{c_0\} \,\longrightarrow\,
h^*{\mathbb P}(V)  \,\longrightarrow\, {\mathbb P}(V)
$$
contradicts the inequality in \eqref{e5}; recall that
$\text{degree}(\xi)\,=\, 0$ (see Proposition \ref{prop1}).

Therefore, we have proved the following theorem:

\begin{theorem}\label{thm1}
Under Assumption \ref{a1}, the line bundle
$L\, \longrightarrow\, {\mathbb P}(E)$ is ample.
\end{theorem}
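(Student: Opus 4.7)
The plan is to proceed by contradiction, using the Nakai-Moishezon criterion. Since Assumption \ref{a1} forces $L \cdot L \geq 0$, if $L \cdot L > 0$ then $L$ is already ample by Nakai-Moishezon; so the task is to show that $L \cdot L = 0$ is impossible. After pulling back along the finite morphism $\beta: \mathbb{P}(V) \to \mathbb{P}(E)$ of \eqref{e2}, the positivity hypothesis descends to \eqref{e5} and the vanishing $L \cdot L = 0$ survives on $\mathbb{P}(V)$, where now $V$ has trivial determinant. I would first expand $(\mathcal{O}_{\mathbb{P}(V)}(1) \otimes f^*\xi)^2 = 0$ using $\mathcal{O}_{\mathbb{P}(V)}(1)^2 = \deg V = 0$ to extract $\deg \xi = 0$ (Proposition \ref{prop1}), and then combine this with \eqref{e5} to deduce that $\mathcal{O}_{\mathbb{P}(V)}(1)$ itself is nef (Corollary \ref{cor1}).

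Next I would convert nefness of $\mathcal{O}_{\mathbb{P}(V)}(1)$ into strong semistability of $V$. Any line-bundle quotient $Q$ of a pullback $h^*V$ to a smooth curve $\widetilde{M}$ corresponds to a section $\gamma$ of $\mathbb{P}(h^*V)$ with $Q = \gamma^*\mathcal{O}_{\mathbb{P}(V)}(1)$; nefness forces $\deg Q \geq 0$, matching the slope $\tfrac{1}{2}\deg h^*V = 0$. Since nefness is preserved by Frobenius pullbacks, this gives Proposition \ref{prop2}. The crucial and most delicate step is to pass from strong semistability with trivial determinant to the much stronger conclusion that $V$ is essentially finite in the sense of Nori; here I would invoke Subramanian's theorem \cite[Theorem 3.2]{Su}, which exhibits such a $V$ as arising from a representation $\rho: \varpi(M, x_0) \to \mathrm{SL}(2, \overline{\mathbb{F}_p})$ of the fundamental group-scheme. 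This is the step where the characteristic-$p$ hypothesis is essential — no analogous statement holds in characteristic zero, which is precisely why Mumford's example can exist there — and it is the main obstacle to extending the argument beyond the $\mathbb{P}^1$-bundle setting.

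With $V$ essentially finite, \cite[p.~557]{BH} supplies a finite cover $h: \widetilde{M} \to M$ such that $h^*V$ is trivial. Trivialization identifies $h^*\mathbb{P}(V)$ with $\widetilde{M} \times \mathbb{P}^1$, and the constant section $\widetilde{M} \times \{c_0\}$ for any $c_0 \in \mathbb{P}^1$ maps to a complete curve $Y \subset \mathbb{P}(V)$. On this curve, $\mathcal{O}_{\mathbb{P}(V)}(1)$ pulls back to a trivial line bundle on $\widetilde{M}$, and $f^*\xi$ pulls back to $h^*\xi$, which has degree zero by Proposition \ref{prop1}. Hence $(\mathcal{O}_{\mathbb{P}(V)}(1) \otimes f^*\xi) \cdot Y = 0$, contradicting \eqref{e5} and completing the argument.
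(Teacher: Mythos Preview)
Your proposal is correct and follows essentially the same route as the paper: reduce via Nakai--Moishezon to the case $L\cdot L=0$, pass to $\mathbb{P}(V)$ with $\det V$ trivial, establish Proposition~\ref{prop1}, Corollary~\ref{cor1}, and Proposition~\ref{prop2} exactly as stated, invoke \cite[Theorem~3.2]{Su} to get essential finiteness, trivialize via \cite{BH}, and derive the contradiction with \eqref{e5} from a constant section. Your exposition even adds a helpful remark on why the argument fails in characteristic zero.
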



\end{document}